\newtheorem{theorem}{Theorem}[section]
\newtheorem{lemma}[theorem]{Lemma}
\newtheorem{claim}[theorem]{Claim}
\newtheorem*{claim*}{Claim}
\newtheorem{proposition}[theorem]{Proposition}
\newtheorem{conjecture}[theorem]{Conjecture}
\newtheorem{problem}[theorem]{Problem}
\theoremstyle{definition}
\newtheorem*{definition*}{Definition}
\newcommand{\eps}{\varepsilon}
\renewcommand{\Pr}{\mathbb{P}}
\title{Chromatic number and regular subgraphs}
\author
{
Barnab\'{a}s Janzer
}
\author 
{
Raphael Steiner
}
\thanks{The second author gratefully acknowledges funding from the SNSF Ambizione Grant No. 216071. }
\author
{
Benny Sudakov 
}
\thanks{Research of the first and third authors was supported in part by SNSF grant 200021-228014.}
\address[Janzer, Steiner, Sudakov]{Department of Mathematics, Institute for Operations Research, ETH Z\"{u}rich, Switzerland.}
\email{\tt $\{$barnabas.janzer, raphaelmario.steiner, benjamin.sudakov$\}$@math.ethz.ch}
\date{}
\begin{document}

\begin{abstract}
    \setlength{\parskip}{\smallskipamount}
    \setlength{\parindent}{0pt}
    \noindent
In 1992, Erd\H{o}s and Hajnal posed the following natural problem: Does there exist, for every $r\in \mathbb{N}$, an integer $F(r)$ such that every graph with chromatic number at least $F(r)$ contains $r$ edge-disjoint cycles on the same vertex set? We solve this problem in a strong form, by showing that there exist $n$-vertex graphs with fractional chromatic number $\Omega\left(\frac{\log \log n}{\log \log \log n}\right)$ that do not even contain a $4$-regular subgraph. This implies that no such number $F(r)$ exists for $r\ge 2$. We show that assuming a conjecture of Harris, the bound on the fractional chromatic number in our result cannot be improved.
 \end{abstract}

\maketitle

\section{Introduction}
A great deal of attention in graph theory has been devoted to the following fundamental meta-question: Given a graph $G$ of huge chromatic number, what kind of subgraphs and substructures can we guarantee to find in it? Over the years, a rich and interconnected array of problems of this type has been discussed in the literature, see, e.g., the recent survey article~\cite{scott22} by Scott for an overview. Despite the popularity of these problems, many of them remain notoriously difficult to attack, in part because of a lack of tools that exist for finding structures in graphs with a large (but constant) chromatic number. 

Several natural problems of this type were raised by Erd\H{o}s and his collaborators, the most famous of which is a conjecture due to Erd\H{o}s and Hajnal~\cite{erdos1969}, stating that for all $k, g \in \mathbb{N}$ there exists an integer $f(k,g)$ such that every graph of chromatic number at least $f(k,g)$ has a subgraph of chromatic number $k$ and girth at least~$g$. The existence of $f(k,4)$ for all $k\in \mathbb{N}$ was proved using a beautiful argument by R\"{o}dl~\cite{roedl}, but progress has been lacking since. In this paper, we shall be concerned with the following problem raised by Erd\H{o}s and Hajnal~\cite[Problem~6]{erdos1992} dating back at least to 1992, that regards the structure of cycles in graphs of large chromatic number.

\begin{problem}[Erd\H{o}s and Hajnal 1992]\label{prob:2}
Is it true that for every $r\in \mathbb{N}$ there exists a number $F(r)$ such that every graph of chromatic number at least $F(r)$ contains $r$ edge-disjoint cycles on the same vertex set?
\end{problem}

This problem was repeated by Erd\H{o}s~\cite[Problem 12]{erdos1997} in a paper published in 1997, and is also included in the 1998 book~\cite{chung1998erdos} by Chung and Graham on Erd\H{o}s's legacy of problems, see also the problem entry on the corresponding website maintained by Fan Chung\footnote{See \url{https://mathweb.ucsd.edu/~erdosproblems/erdos/newproblems/ManyEdgeDisjointCycles.html}}. 

Since a graph with large chromatic number must contain many edges, Problem~\ref{prob:2} is closely related to another question of  Erd\H{o}s \cite{erdos1975}. In 1976 he asked for the maximum number of edges in an $n$-vertex graph that does not contain $r$ edge-disjoint cycles on the same vertex set. In a recent breakthrough on this problem, Chakraborti, Janzer, Methuku and Montgomery~\cite{chakraborti2024} proved an upper bound of the form $n \cdot \mathrm{polylog}(n)$ for every fixed $r$, improving the previous best bound of $O(n^{3/2})$. This directly implies (via degeneracy) that every $n$-vertex graph without $r$ edge-disjoint cycles on the same vertex set has chromatic number at most $\mathrm{polylog}(n)$. The best known lower bound on the number of edges in an $n$-vertex graph without $r$ edge-disjoint cycles on the same vertex set (for $r\ge 2$) is of the form $\Omega(n\cdot \log\log n)$. This follows from a famous construction due to Pyber, R\"{o}dl and Szemer\'{e}di~\cite{pyber1995} which is known at least since 1985 (see~\cite{pyber1985}). Concretely, the authors of~\cite{pyber1995} showed that there exist $n$-vertex graphs with $\Omega(n \log \log n)$ edges and no $k$-regular subgraph for any $k\ge 3$, thus in particular containing no edge-disjoint cycles with the same vertex set. However, this construction is inherently bipartite and can thus not directly be used to construct graphs of large chromatic number. In fact, perhaps this feature of Pyber, R\"{o}dl and Szemer\'{e}di's construction was what inspired Erd\H{o}s and Hajnal to pose Problem~\ref{prob:2} above.

As the main result of this paper, we give a strong negative answer to Problem~\ref{prob:2}, by showing the following result. Here, $\chi_f(G)$ denotes the well-known \emph{fractional chromatic number}.

\begin{theorem}\label{thm:main}
For some $c>0$ and all sufficiently large $n\in \mathbb{N}$, there exists an $n$-vertex graph $G$ such that $\chi_f(G)\ge c\frac{\log\log n}{\log \log \log n}$ and $G$ contains no $4$-regular subgraph. 
\end{theorem}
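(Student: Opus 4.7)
My plan is to construct $G$ as a random graph with targeted modifications. Let $G_0 \sim G(n,p)$ with $p = c\log\log n / n$ for a small absolute constant $c > 0$, and set $d := np = c\log\log n$. A standard first-moment bound on independent sets yields $\alpha(G_0) = O(n\log d/d)$ with high probability, so
\[
\chi_f(G_0) \;\ge\; n/\alpha(G_0) \;=\; \Omega(d/\log d) \;=\; \Omega\!\left(\log\log n / \log\log\log n\right),
\]
matching the target bound. The challenge is that $G_0$ typically contains many $4$-regular subgraphs; I plan to destroy them in two stages, while preserving $\chi_f$ up to a $1-o(1)$ factor.

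For \emph{small} $4$-regular subgraphs on $s$ vertices with $s\le s_0 := n/(\log\log n)^2$, a first-moment calculation---using $\binom{n}{s}\le(en/s)^s$, the bound $(Cs)^{2s}$ on the number of labeled $4$-regular graphs on $s$ vertices, and that each such subgraph has exactly $2s$ edges present with probability $p^{2s}$---bounds the expected number on $s$ vertices by $(C'sd^2/n)^s$. This is geometrically summable for $s\le s_0$, giving total expected count $O(1)$. By Markov, with probability at least $1/2$ only $O(1)$ small $4$-regular subgraphs exist, which we destroy by deleting $O(1)$ vertices. For \emph{large} $4$-regular subgraphs ($s > s_0$), the first-moment bound is ineffective; here the plan is to show that the $4$-core $C_4(G_0)$ admits a feedback vertex set $F$ of size $o(n)$ whose deletion destroys all $4$-regular subgraphs of size $> s_0$. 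Existence of $F$ should follow from a careful structural analysis of the $4$-core of $G(n, d/n)$ at $d = \Theta(\log\log n)$.

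After the two rounds of deletion, the remaining graph $G'$ has $n(1-o(1))$ vertices and no $4$-regular subgraph. Since $\alpha(G')\le\alpha(G_0)$, one has $\chi_f(G')\ge |V(G')|/\alpha(G') = (1-o(1))\chi_f(G_0)$, still $\Omega(\log\log n/\log\log\log n)$; padding with isolated vertices recovers an $n$-vertex graph with the claimed properties.

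The principal obstacle is the large-regime step: exhibiting a sublinear feedback vertex set in the $4$-core. A naive union bound over all $2^n$ subsets of $V(G_0)$ fails, since a random density-$\varepsilon$ set misses a specific $s$-subset with probability $(1-\varepsilon)^s$, and for $\varepsilon = o(1)$ this is close to $1$, while the number of candidate $4$-regular subgraphs may be as large as $2^{\Omega(s_0)}$. Overcoming this requires either a sharp combinatorial bound on the count of $4$-regular subgraphs of $G_0$ in the large regime, or a structural handle on the $4$-core---e.g., a low treewidth decomposition, controlled expansion after removing a sparse set, or a reduction to a bipartite Pyber--R\"odl--Szemer\'edi-type sub-structure---and this is where I expect the main technical novelty to lie.
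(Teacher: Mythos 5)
Your proposal has a genuine gap at exactly the point you flag: destroying the large $4$-regular subgraphs of $G(n,p)$ with $p=c\log\log n/n$. This is not a technical detail to be filled in later --- it is the entire content of the theorem, and moreover there is strong evidence that it cannot be filled for this construction. Since $d=np=c\log\log n\to\infty$ exceeds (for large $n$) the absolute constant threshold for the appearance of $4$-regular subgraphs in sparse random graphs (Bollob\'as--Kim--Verstra\"ete; Pra\l{}at--Verstra\"ete--Wormald showed that the $6$-core, which here has $(1-o(1))n$ vertices, a.a.s.\ spans a $4$-regular subgraph), $G_0$ whp contains $4$-regular subgraphs on $\Omega(n)$ vertices, and it contains them robustly: after deleting any $o(n)$ vertices the remainder still has $(1-o(1))n$ vertices and average degree $(1-o(1))d\to\infty$, and a standard edge-distribution argument shows its $6$-core still has $\Omega(n)$ vertices, so the same mechanism that produced large regular subgraphs in $G_0$ is still in force. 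A uniformly dense random graph is therefore the wrong starting object: uniform density $d\to\infty$ is precisely what guarantees large regular subgraphs. (The parts of your argument that do work --- the first-moment bound $\alpha(G_0)=O(n\log d/d)$, the count $(C'sd^2/n)^s$ for small $s$, and the padding at the end --- are fine, but they are the easy parts.)

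The paper's construction avoids this trap by being maximally non-uniform: it takes parts $B_1,\dots,B_C$ with $|B_i|=n^{1-20^i\eps}$, $\eps=1/\sqrt{\log n}$, $C=\frac1{10}\log\log n$, and gives each vertex of $B_i$ exactly one random neighbour in each later (much smaller) part. The point is that every subgraph contained in the union of the larger parts $B_1,\dots,B_{i-1}$ is whp sparser than edge density $1.1$ (i.e.\ barely denser than a forest), which a short counting argument shows is incompatible with hosting the bulk of a $4$-regular subgraph; and the large fractional chromatic number comes not from uniform edge density but from the weighting $w(v)=1/|B_i|$ on $B_i$, under which no independent set captures more than an $O(\log C/C)$ fraction of the total weight. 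In other words, the construction decouples ``locally almost a forest'' (which kills regular subgraphs) from ``no heavy independent set under a suitable weighting'' (which forces $\chi_f$ large), whereas your approach tries to get both from a single density parameter, and these two requirements are contradictory for $G(n,p)$.
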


In particular, since $\chi(G)\ge \chi_f(G)$ for every graph $G$, this shows the existence of graphs with arbitrarily large chromatic number that do not even contain two edge-disjoint cycles with the same vertex set. Hence, none of the numbers $F(r)$ in the problem of Erd\H{o}s and Hajnal can exist for any $r$ greater than $1$ (trivially, we have $F(1)=3$). The proof of Theorem~\ref{thm:main} involves carefully analyzing a randomly constructed multi-partite variant of the construction of Pyber, Rödl and Szemerédi mentioned above. By slightly modifying our arguments we also obtain a much simpler proof for the existence of dense graphs without $3$-regular subgraphs (the original analysis by Pyber, Rödl and Szemer\'{e}di~\cite{pyber1995} involved heavy computations).

The problem of determining the asymptotic behaviour of the maximum number of edges of graphs without a $k$-regular subgraph (for some $k\ge 3$) was posed by Erd\H{o}s-Sauer \cite{ES75} in 1975 and attracted a lot of attention in the last 40 years. It was recently fully resolved by Janzer and Sudakov~\cite{janzersudakov2023}, who showed that the answer is $\Theta(n\log\log n)$, thus matching the lower-bound construction of Pyber, R\"{o}dl and Szemer\'{e}di.

\begin{theorem}[\cite{janzersudakov2023}]\label{thm:averagedegree}
For every integer $k\ge 3$ there exists some $C_k>0$ such that for sufficiently large $n$ every $n$-vertex graph without a $k$-regular subgraph has at most $C_kn\log\log n$ edges.
\end{theorem}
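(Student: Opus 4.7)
My plan is to show that if $G$ is an $n$-vertex graph with more than $C_k n \log\log n$ edges (for a sufficiently large constant $C_k$), then $G$ contains a $k$-regular subgraph, following a two-stage strategy that must match the tight lower bound of Pyber, R\"odl and Szemer\'edi.

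\textbf{Stage 1: Reduction to a near-regular bipartite subgraph.} Starting from $G$ with average degree roughly $d = C_k\log\log n$, I first pass to a bipartite subgraph retaining at least half the edges (e.g.\ a maximum cut). I then combine the classical minimum-degree lemma (every graph of average degree $d$ has a subgraph of minimum degree $\geq d/2$) with a dyadic degree decomposition to extract a bipartite subgraph $H$ on parts $A\cup B$ in which every vertex has degree in a window $[d_0,2d_0]$ with $d_0=\Omega(\log\log n)$. Such reductions cost only constant factors in the density and so preserve the assumption after adjusting~$C_k$.

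\textbf{Stage 2: Finding a $k$-regular subgraph inside $H$.} The key lemma to establish is that any bipartite graph with degrees confined to $[d_0,2d_0]$ with $d_0\geq C_k'\log\log n$ contains a $k$-regular subgraph. My attempt would be an iterative random sparsification: independently subsample edges with probability $\tfrac{1}{2}$, and use Chernoff bounds to argue that after each step the graph remains near-regular in some window $[d_i,2d_i]$. After $T=\Theta(\log d_0)=\Theta(\log\log\log n)$ rounds the typical degree drops to $\Theta(k)$, and at this point I would invoke a defect-Hall or matching argument (e.g.\ decomposing a bipartite $d$-regular graph into perfect matchings via K\"onig's theorem) to extract a $k$-regular bipartite subgraph. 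Accessing Hall's theorem cleanly may require first removing a small set of low-degree or badly-expanding vertices at each iteration, which can be controlled via the Lov\'asz Local Lemma or a deterministic pruning of bad events.

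\textbf{Main obstacle.} The bound $n\log\log n$ is tight, so there is essentially no slack: the PRS construction is itself bipartite with the right degree profile, yet contains no $k$-regular subgraph. Hence the naive random sparsification of Stage 2 cannot work in isolation; something must exploit structure that the PRS construction lacks. I expect the real difficulty is in controlling the cumulative failure probability across $\Theta(\log\log\log n)$ rounds when the available slack in $d_0=\Theta(\log\log n)$ is only logarithmic in $T$. A plausible fix is to track a carefully chosen potential function through the iterations---for example an edge-expansion parameter, an entropy of a random orientation, or the value of a fractional $k$-regular subgraph---that either improves in each round (leading to a clean extraction at the end) or yields a strong structural dichotomy. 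The argument would conclude by showing that any $H$ which resists the iteration must resemble a PRS-type configuration, which by a direct counting argument cannot have more than $O(n\log\log n)$ edges, contradicting the assumed density.
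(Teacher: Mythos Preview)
This theorem is not proved in the present paper at all: it is quoted from \cite{janzersudakov2023} and used as a black box in Section~\ref{sec:frac}. There is therefore no ``paper's own proof'' to compare your proposal against.

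As for the proposal itself, what you have written is a plan rather than a proof, and you have already identified the genuine gap yourself. Stage~1 is standard and fine, but Stage~2 as described cannot succeed: the Pyber--R\"odl--Szemer\'edi graph is bipartite, near-regular in exactly the sense you reduce to, has degrees $\Theta(\log\log n)$, and yet contains no $3$-regular subgraph. So any argument that only uses the degree window $[d_0,2d_0]$ with $d_0=\Theta(\log\log n)$ is doomed; the iterative random halving plus K\"onig/Hall extraction would, if it worked, contradict the PRS lower bound. Your final paragraph gestures at a dichotomy (``resists the iteration $\Rightarrow$ resembles PRS''), but no such structural characterization is supplied, and this is precisely the hard content of the theorem. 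The actual Janzer--Sudakov argument does not proceed by random sparsification to constant degree followed by matching decomposition; it instead passes to an almost-regular subgraph and then uses a careful counting of homomorphic copies of $K_{r,r}$ to locate a regular bipartite subgraph, an idea absent from your outline.
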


In the light of this discussion, it seems natural to ask for the asymptotic behaviour of the maximum chromatic number $g_k(n)$ of an $n$-vertex graph without a $k$-regular subgraph. Theorems~\ref{thm:main} and~\ref{thm:averagedegree} imply that $\Omega\left(\frac{\log \log n}{\log \log \log n}\right)\le g_k(n)\le O(\log \log n)$ for every fixed $k\ge 4$. Given the local sparsity of graphs without a regular subgraph, we suspect that the lower bound gives the truth for every $k\ge 3$. Supporting this claim, in Section~\ref{sec:frac} we will show that assuming a conjecture of Harris, every $n$-vertex graph without a $k$-regular subgraph has fractional chromatic number at most $O\left(\frac{\log \log n}{\log \log \log n}\right)$.

\textbf{Notation and Preliminaries.} Given a graph $G$, we denote by $V(G)$ its vertex set, by $E(G)$ its edge set, and by $e(G)$ its number of edges. For a subset $X\subseteq V(G)$, we denote by $G[X]$ the induced subgraph of $G$ with vertex set $X$, and for two disjoint sets $U, V\subseteq V(G)$ we denote by $e_G(U, V)$ the number of edges in $G$ with one endpoint in $U$ and one endpoint in $V$. Given a graph $G$, its \emph{fractional chromatic number} $\chi_f(G)$ is defined as the optimal value of the following linear program (by $\mathcal{I}(G)$ we denote the collection of independent sets in $G$):

\begin{align*}
    \text{min} \sum_{I \in \mathcal{I}(G)}&{x_I} \\
    \text{s.t.} \sum_{I\in \mathcal{I}(G): v \in I}&{x_I}\ge 1~~(\forall v \in V(G)), \\
    & x_I \ge 0~~(\forall I \in \mathcal{I}(G)).
\end{align*}

Note that the optimal value of the corresponding integer program (requiring $x_I\in \mathbb{Z}$) is exactly the chromatic number $\chi(G)$ of the graph, and thus $\chi(G)\ge \chi_f(G)$ for every graph $G$. By linear programming duality, we can also express $\chi_f(G)$ as the optimal value of

\begin{align*}
    \text{max} &\sum_{v\in V(G)}{w_v} \\
    \text{s.t.}~~~~&\sum_{v \in I}{w_v}\le 1~~(\forall I \in \mathcal{I}(G)), \\
    &~~~~~w_v \ge 0~~(\forall v\in V(G)).
\end{align*}

This implies that a graph $G$ satisfies $\chi_f(G)\le k$ for some real number $k>1$ if and only if for every weighting $w:V(G)\rightarrow \mathbb{R}_{\ge 0}$ there exists an independent set $I$ in $G$ such that $\sum_{v\in I}{w(v)}\ge \frac{1}{k}\sum_{v\in V(G)}{w(v)}$.

\section{Proof of Theorem~\ref{thm:main}}\label{sec:proof}

We define a random graph as follows. Set $\eps=\eps_n=\frac{1}{\sqrt{\log n}}$. Let $C=\frac{1}{10}\log\log n$, and take disjoint sets $B_1,\dots,B_C$ such that $|B_i|=n^{1-20^i\eps}$. (We will omit floors and ceilings.) The vertex set of our graph is $V=\bigcup_{i=1}^C B_i$ (and each $B_i$ will be an independent set). For each $i\in[C]$, each vertex $v\in B_i$, and each $j\in[C]$ with $j>i$, we independently and uniformly at random pick a vertex $w_{v,j}\in B_j$ to be the unique neighbour of $v$ in $B_j$. Then our random graph $G$ has edge set given by all such pairs $vw_{v,j}$. Note that $|V|<n$ for large $n$.

\begin{lemma}\label{lem:reg}
    With probability $1-o(1)$, $G$ does not contain a $4$-regular subgraph.
\end{lemma}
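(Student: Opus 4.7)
Suppose for contradiction that $G$ contains a $4$-regular subgraph $H$ with vertex set $S$, and write $S_i = S\cap B_i$, $s_i = |S_i|$, and $i_0 = \min\{i : S_i \ne \emptyset\}$. The starting structural observation is that $B_{i_0}$ is an independent set and is the smallest-indexed layer meeting $S$, so every $v\in S_{i_0}$ has all four of its $H$-neighbours in strictly higher layers. Since each $v\in B_{i_0}$ has at most one $G$-neighbour in each $B_j$ with $j>i_0$ (namely $w_{v,j}$), these four $H$-neighbours must lie in four distinct higher layers $B_j$ with $w_{v,j}\in S_j$. In particular, the (necessary) event
\[
\mathcal{E}(S) \;=\; \Bigl\{ \, |\{j>i_0 : w_{v,j}\in S_j\}|\ge 4 \text{ for every } v\in S_{i_0}\, \Bigr\}
\]
must hold. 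The proof is then a first-moment / union bound argument bounding $\Pr[\mathcal{E}(S) \text{ for some } S]$.

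For a fixed $S$, the indicators $\mathbb{1}[w_{v,j}\in S_j]$ are independent Bernoullis with parameter $p_j := s_j/|B_j|$. Setting $Q = Q(S) := \sum_{j>i_0} p_j$, the fourth elementary symmetric bound gives
\[
\Pr\bigl[\,|\{j>i_0:w_{v,j}\in S_j\}|\ge 4\bigr] \;\le\; e_4(p_{i_0+1},\dots,p_C) \;\le\; Q^4/24,
\]
and independence over $v\in S_{i_0}$ then yields $\Pr[\mathcal{E}(S)] \le (Q^4/24)^{s_{i_0}}$. Summing over all possible vertex-set profiles and their embeddings,
\[
\Pr[G \text{ contains a 4-regular subgraph}] \;\le\; \sum_{i_0=1}^{C-4}\; \sum_{\substack{(s_j)_{j\ge i_0}\\ s_{i_0}\ge 1}}\; \prod_{j\ge i_0}\binom{|B_j|}{s_j} \cdot \Bigl(\tfrac{Q^4}{24}\Bigr)^{s_{i_0}}.
\]

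The core of the proof is to show that this sum is $o(1)$. The guiding idea is to expand $Q^{4s_{i_0}}$ by the multinomial theorem and match each resulting monomial $\prod_j p_j^{k_j}$ against the combinatorial factor $\binom{|B_j|}{s_j}$ using $\binom{|B_j|}{s_j}\le (e|B_j|/s_j)^{s_j}$, so that each $p_j^{k_j}=(s_j/|B_j|)^{k_j}$ can absorb the entropy from the corresponding $\binom{|B_j|}{s_j}$ term. The super-exponential layer-size decay $|B_{i+1}|/|B_i| = n^{-19\cdot 20^i\eps}$ built into the construction is precisely what makes this work: it provides so much geometric room that when we further multiply by the $(1/24)^{s_{i_0}}$ slack and aggregate, the resulting geometric series converge and the total is $o(1)$. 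The calibration $\eps = 1/\sqrt{\log n}$ and $C = (\log\log n)/10$ is chosen so that this estimate just barely closes while the fractional chromatic number stays $\Omega(\log\log n/\log\log\log n)$.

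The main obstacle will be the regime where $S$ is ``dense'' (say $s_j$ close to $|B_j|$ across many higher layers), for then $Q$ is large and the up-hit condition at $i_0$ alone is almost vacuous. This will need to be handled by exploiting the complementary constraint at the top layer --- every $v\in S_{i^*}$ with $i^*=\max\{i:S_i\ne\emptyset\}$ must have at least four ``down-hits'' in $S_{<i^*}$, giving an analogous factor $(\mu^4/24)^{s_{i^*}}$ with $\mu=(\sum_{j<i^*}s_j)/|B_{i^*}|$ --- or, equivalently, by splitting the union bound according to the maximal $p_j$ and arguing that the densest layer forces $S$ to be too small elsewhere. Either way, combining the two ends of the layer chain with the geometric decay of the $|B_i|$ should suffice to push the total bound below~$1$, completing the proof.
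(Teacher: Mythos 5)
Your approach has a genuine gap, and it is exactly at the point you flag as ``the main obstacle.'' The necessary conditions you extract from $4$-regularity --- at least four up-hits for every vertex of the lowest occupied layer, and at least four down-hits for every vertex of the highest occupied layer --- are far too weak to support a union bound over all $S$. Take $S=V$: every $v\in B_1$ has exactly $C-1\ge 4$ up-neighbours deterministically, so your event $\mathcal{E}(V)$ holds with probability $1$ (indeed $Q=\sum_{j>1}p_j=C-1$, so $(Q^4/24)^{s_{i_0}}$ is vacuous), and the top-layer condition also holds w.h.p.\ since the expected in-degree of a vertex of $B_C$ is enormous. The combinatorial factor $\prod_j\binom{|B_j|}{s_j}$ equals $1$ here, so this single term already contributes essentially $1$ to your union bound. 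The same happens for any $S$ containing several full layers, and the suggested fix (``the densest layer forces $S$ to be too small elsewhere'') is simply false for such $S$. The underlying issue is that your argument only uses the degree \emph{lower} bound at the two extreme layers; a graph in which every vertex has degree at least $4$ need not contain a $4$-regular subgraph, and $G$ itself is such a graph w.h.p. Ruling out dense $S$ requires using the degree \emph{upper} bound $4$ throughout, which your framework never does.

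The paper's proof uses $4$-regularity in an essentially different way: given a $4$-regular $H$ on $s$ vertices, it chooses the scale $i$ with $|B_{i+1}|<s/1000\le|B_i|$, notes that the layers above $i$ are too small to absorb many edges of $H$, and bounds the edges between $\bigcup_{j<i}B_j$ and $B_i$ by $\min(|X|,4|Y|)\le\frac45 s$ (here the upper bound ``degree exactly $4$'' is crucial). This forces a subgraph $H''$ of edge density at least $1.1$ whose vertices all lie in layers of size at least $|B_{i-1}|$, which is super-polynomially larger than $|V(H'')|\le 1000|B_i|$; a first-moment count of such dense subgraphs then closes easily. If you want to salvage your plan, you would need to replace the extreme-layer degree conditions by an edge-counting argument of this kind; as written, the union bound cannot be made to converge.
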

\begin{proof}
    Assume that $G$ has a $4$-regular subgraph $H$. Let $H$ have $s>0$ vertices, and, for convenience, set $B_{C+1}=\emptyset$ and $B_0=V$. Let $i\in [C]\cup\{0\}$ such that $|B_{i+1}|<s/1000\leq |B_{i}|$.

    \begin{claim}\label{claim:subgraph}
        For some $x>0$, $H$ has a subgraph $H''$ on $x$ vertices such that $V(H'')\subseteq \bigcup_{j=1}^{i-1}B_j$ and $e(H'')\geq 1.1x$.  
    \end{claim}
    \begin{proof}
        First note that $\sum_{j\geq i+1}|B_j|\leq |B_{i+1}|\sum_{j\geq i+1}n^{-(20^j-20^{i+1})\eps}\leq 2|B_{i+1}|\leq \frac{1}{500}s$. Since $H$ is $4$-regular, at most $\frac{1}{125}s$ edges of $H$ have an endpoint in $\bigcup_{j>i}B_j$. Let $H'$ denote the subgraph of $H$ induced by $V(H)\cap\bigcup_{j=1}^{i}B_j$, and $H''$ the subgraph induced by $V(H)\cap\bigcup_{j=1}^{i-1}B_j$. So our previous observations give $e(H')\geq (2-\frac{1}{100})s$.
        
        Let $X=V(H'')=V(H)\cap \bigcup_{j=1}^{i-1} B_{j}$ and $Y=V(H)\cap B_i$. Since $H$ is $4$-regular, we have $e_H(X,Y)\leq 4|Y|$. Also, since $H$ is a subgraph of $G$, we have $e_H(X,Y)\leq |X|$. It follows that $e_H(X,Y)\leq \frac{4}{5}(|X|+|Y|)\leq \frac{4}{5}s$, and hence $e(H'')\geq (2-\frac{1}{100})s-\frac{4}{5}s>1.1s\geq 1.1|X|.$ 
    \end{proof}
    
    For each $i\in [C]\cup\{0\}$, let $\mathcal{A}_i$ be the event that $G$ has a subgraph $H''$ on vertex set $V(H'')\subseteq \bigcup_{j=1}^{i-1}B_j$ such that for some $0<x\leq 1000|B_i|$ we have $|V(H'')|=x$ and $e(H'')\geq 1.1x$. Clearly, $\mathcal{A}_i$ can only hold if $i\geq 2$.
    Note that, if $n$ is large enough, for any fixed $i$ and $x$, the probability that $G$ has such a subgraph $H''$ is at most
    \begin{align*}
        \binom{n}{x}\binom{x^2/2}{\lceil 1.1x\rceil}\left(\frac{1}{|B_{i-1}|}\right)^{\lceil 1.1x\rceil}&\leq \left(\frac{en}{x}\right)^{x}\left(\frac{ex^2/2}{|B_{i-1}|\lceil 1.1x\rceil}\right)^{\lceil 1.1x\rceil}\\
        &\leq \left(\frac{en}{x}\right)^{x}\left(\frac{ex}{|B_{i-1}|}\right)^{1.1x}\\
        &\leq \left(10\frac{nx^{0.1}}{|B_{i-1}|^{1.1}}\right)^{x}\\
        &\leq \left(100\frac{n|B_i|^{0.1}}{|B_{i-1}|^{1.1}}\right)^{x}\\
        &= \left(100n^{-\frac{9}{10}20^{i-1}\eps}\right)^{x}\\
        &\leq e^{-\frac{1}{2}x\sqrt{\log n}}.
    \end{align*}
It follows that $\Pr[\mathcal{A}_i]\leq \sum_{x\geq 1} e^{-\frac{1}{2}x\sqrt{\log n}}\leq 2 e^{-\frac{1}{2}\sqrt{\log n}}$ (if $n$ is large enough). Hence, with probability $1-o(1)$, for all $i\in [C]\cup\{0\}$, $\mathcal{A}_i$ does not hold. The result follows using Claim~\ref{claim:subgraph}.
\end{proof}

\textbf{Remark.}
Our arguments can be used to give a significantly shorter proof of the result of Pyber, Rödl and Szemerédi~\cite{pyber1995} about the existence of graphs with $n$ vertices, $\Theta(n\log\log n)$ edges and no $3$-regular subgraphs -- the construction is (essentially) the same, but our calculations are much simpler.
Indeed, let $G'$ denote the subgraph of $G$ obtained by keeping only the edges which have an endpoint in $B_1$ and let $H$ be a $3$-regular subgraph of $G'$ with $s$ vertices and $1.5s$ edges.
Since $G'$ is bipartite and $H$ is regular we have exactly $s/2$ vertices of $H$ in $B_1$. Using the same notation as in the above proof, we have $e(H')\geq 1.49s$, $e_H(X,Y)\leq 3|Y|$ and $e_H(X,Y)\leq |X \cap B_1|=s/2$. If $X$ has size at most $0.9s$, then 
$e(H'')=e(H')-e_H(X,Y)\geq 0.99s \geq  1.1|X|$. Otherwise $|Y| \leq 0.1s$, $e_H(X,Y)\leq 0.3s$ and again $e(H'')=e(H')-e_H(X,Y)\geq 1.19s> 1.1|X|$. 
The rest of the proof works the same way as in the above. 

\begin{lemma}\label{lem:frac}
    With probability $1-o(1)$, $\chi_f(G)=\Omega(\frac{\log\log n}{\log\log\log n})$.
\end{lemma}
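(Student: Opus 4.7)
I apply LP duality with the layer-balancing weighting $w(v) = 1/|B_i|$ for each $v \in B_i$, which has total weight $\sum_v w(v) = C = \Theta(\log\log n)$. Since $\chi_f(G) \ge \sum_v w(v) / \max_{I \in \mathcal{I}(G)} \sum_{v \in I} w(v)$ for any nonnegative weighting, it suffices to show that with probability $1 - o(1)$, every independent set $I$ in $G$ satisfies $T(I) := \sum_{i=1}^{C} \alpha_i = O(\log\log\log n)$, where $\alpha_i := |I \cap B_i|/|B_i|$; this will yield $\chi_f(G) \ge C / O(\log\log\log n) = \Omega(\log\log n / \log\log\log n)$.

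The crux is a ``top-down'' concentration estimate. For any tuple $(J_j)_{j > i}$ of subsets $J_j \subseteq B_j$, let
\[
A_i(J_{>i}) := \{ v \in B_i : w_{v,j} \notin J_j \text{ for all } j > i\}.
\]
Over the independent random choices $(w_{v,j})_{v \in B_i,\, j > i}$, $|A_i(J_{>i})|$ is a sum of $|B_i|$ Bernoullis with parameter $p(J_{>i}) := \prod_{j > i}(1 - |J_j|/|B_j|)$. A Chernoff-type estimate, combined with a union bound over all $2^{\sum_{j > i}|B_j|}$ candidate tuples $(J_j)$ and over the $C$ values of $i$, yields with probability $1 - o(1)$:
\[
|A_i(J_{>i})| \le |B_i|\, p(J_{>i}) + o(|B_i|) \qquad \text{for all $i \in [C]$ and all $(J_j)_{j > i}$.}
\]
This union bound is comfortably absorbed by the super-exponential layer shrinkage $|B_{j+1}|/|B_j| = n^{-19 \cdot 20^j \eps}$, which gives $\sum_{j > i}|B_j| \le 2|B_{i+1}| \ll |B_i|/(\log n)^{100}$ at every level.

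Applying this to the slices $J_j = I \cap B_j$ of any independent set $I$, and using $I \cap B_i \subseteq A_i((I \cap B_j)_{j > i})$, we deduce
\[
\alpha_i \le \prod_{j > i}(1 - \alpha_j) + o(1) \le e^{-S_i} + o(1), \qquad S_i := \sum_{j > i} \alpha_j.
\]
Thus the decreasing sequence $(S_i)$ satisfies the backwards recursion $S_{i-1} - S_i = \alpha_i \le e^{-S_i} + o(1)$ with $S_C = 0$, and is therefore dominated by the sequence $x_0 = 0$, $x_{k+1} = x_k + e^{-x_k}$. Substituting the ansatz $x_k = \log k + c_k$ shows $c_k$ stays bounded (in fact $c_k \to 0$), giving $x_k = (1+o(1))\log k$, and hence $T(I) = S_0 \le x_C = (1+o(1))\log C = O(\log\log\log n)$, as required.

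The main technical obstacle is the concentration-plus-union-bound step. Because the slices $(I \cap B_j)_{j > i}$ are determined by the random graph, the Chernoff estimate for $|A_i|$ must hold uniformly over all $2^{\sum_{j > i}|B_j|}$ candidate tuples $(J_j)$. The construction's super-aggressive layer-size decay $|B_i| = n^{1 - 20^i \eps}$ is tailored precisely for this: it ensures that the union-bound entropy $\sum_{j > i}|B_j|$ is negligible compared to the concentration scale $|B_i|\, p(J_{>i})$, even when the escape probability $p(J_{>i})$ dips as low as $1/(\log n)^{O(1)}$.
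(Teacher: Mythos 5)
Your proposal is correct. It shares with the paper's proof the dual weighting $w(v)=1/|B_i|$, the reduction to bounding the maximum weight of an independent set by $O(\log C)$, and the crucial structural fact that $\sum_{j>i}|B_j|\le 2|B_{i+1}|$ is vanishingly small compared to $|B_i|$, so that a union bound over all subsets of the layers above $B_i$ is affordable. But the way you exploit this is genuinely different. The paper fixes a layer $i$ where an alleged heavy independent set $I$ is dense, and union-bounds over the choice of $I\cap B_i$ (costing $\binom{|B_i|}{p_i|B_i|}$) and of $I\cap\bigcup_{j>i}B_j$ (costing $2^{\sum_{j>i}|B_j|}$), playing these entropy terms off against the edge-avoidance probability $\prod_{j>i}(1-p_j)^{p_i|B_i|}$; this yields directly that no independent set has weight above $10\log C$. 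You instead prove a uniform concentration statement for the survival counts $|A_i(J_{>i})|$ over all upper slices $(J_j)_{j>i}$ --- which requires no union bound over $I\cap B_i$ at all, since concentration of the count automatically covers every choice of $I\cap B_i\subseteq A_i(J_{>i})$ --- and then convert it into the deterministic recursion $\alpha_i\le e^{-S_i}+o(1)$, whose solution gives the sharper bound $(1+o(1))\log C$. Your route avoids the binomial entropy term and the case analysis over the threshold density $\log C/C$, at the cost of the (routine) recursion analysis. One point to make explicit in a full write-up: the additive error $t$ in the concentration step must be chosen in the window where $t^2/|B_i|$ dominates $\sum_{j>i}|B_j|\le 2|B_i|e^{-19\sqrt{\log n}}$ (so that Chernoff beats the union bound) while $C\cdot t/|B_i|=o(\log C)$ (so the accumulated error over the $C$ steps of the recursion is negligible); taking $t=|B_i|e^{-\sqrt{\log n}}$ satisfies both with plenty of room, exactly as you indicate.
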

\begin{proof}
    We assign weight $1/|B_i|$ to each vertex in $B_i$. Since the total weight is $C=\frac{1}{10}\log\log n$, it suffices to show that, with probability $1-o(1)$, each independent set has weight at most $10\log C$.

    Let $\mathcal{E}_i$ be the event that $G$ contains an independent set $I\subseteq \bigcup_{j\geq i}B_j$ with total weight at least $9\log C$ such that $|I\cap B_i|/|B_i|\geq \frac{\log C}{C}$. Clearly, if $G$ has an independent set of weight at least $10\log C$, then $\mathcal{E}_i$ holds for some $i\in [C]$. Thus, we will upper bound the probability of $\mathcal{E}_i$ for each $i$.

    Given some $i$ and a positive integer $t\geq \frac{\log C}{C}|B_i|$, we consider the probability that there is some $I\subseteq \bigcup_{j\geq i}B_j$ with total weight at least $9\log C$ and $|I\cap B_i|=t$ such that $I$ is independent in $G$. Fix such a set $I$, and let $p_j=|I\cap B_j|/|B_j|$ for all $j\geq i$, and note that 
    $\frac{\log C}{C} \leq p_i\leq 1$. The probability that there are no edges in $G$ between $I\cap B_i$ and $I\cap \bigcup_{j>i}B_j$ is
    \begin{align*}
        \prod_{j>i} (1-p_j)^{p_i|B_i|}\leq e^{-p_i|B_i|\sum_{j>i} p_j}\leq e^{-p_i|B_i|\cdot 8\log C}.
    \end{align*}

Moreover, the number of such sets $I$ is at most
\begin{align*}
    \binom{|B_i|}{p_i|B_i|}2^{\sum_{j>i}|B_j|}.
\end{align*}
Note that $\sum_{j>i}|B_j|\leq 2|B_{i+1}|=2|B_i|n^{-(20^{i+1}-20^{i})\eps}\leq |B_i|e^{-5\sqrt{\log n}}$, and $\binom{|B_i|}{p_i|B_i|}\leq \left(\frac{e}{p_i}\right)^{p_i|B_i|}\leq e^{(1+\log(1/p_i))p_i|B_i|}$.
Thus, by the union bound, the probability that such an independent set exists in $G$ is at most
\begin{align*}
    e^{-p_i|B_i|\cdot 8\log C}\cdot e^{(1+\log(1/p_i))p_i|B_i|}\cdot 2^{|B_i|e^{-5\sqrt{\log n}}}&\leq \exp\left({|B_i|\left(-8p_i\log C+p_i(1+\log(1/p_i))+e^{-5\sqrt{\log n}}\right)}\right)\\
    &\leq \exp\left({|B_i|\left(-8p_i\log C+2p_i\log C+e^{-5\sqrt{\log n}}\right)}\right)\\
    &\leq \exp\left({-|B_i|\left(6(\log C)^2/C-e^{-5\sqrt {\log n}}\right)}\right)\\
    &=\exp\left({-\Omega(|B_i|/\log\log n)}\right).
\end{align*}

The calculation above shows that for any fixed value of $|I\cap B_i|$, the probability of such an $I$ existing is $e^{-\Omega(|B_i|/\log\log n)}$. Since there are at most $|B_i|$ choices for the value of $|I\cap B_i|$, this gives
\[\Pr[\mathcal{E}_i]\leq |B_i|e^{-\Omega(|B_i|/\log\log n)}\leq e^{-n^{1/2}}\]
if $n$ is large enough. Hence, with probability $1-o(1)$, none of the events $\mathcal{E}_i$ (for $i\in [C]$) hold, and hence $G$ has no independent set of weight at least $10\log C$.
\end{proof}

Theorem~\ref{thm:main} now follows by combining Lemma~\ref{lem:reg} and Lemma~\ref{lem:frac}, noting that for sufficiently large $n$ the constructed graph $G$ has $<n$ vertices. We can then simply fill up $G$ with isolated vertices to make the number of vertices match exactly $n$, while leaving the fractional chromatic number unchanged.

\textbf{Remark.} One can show that the graph $G$ constructed above indeed has chromatic number $\chi(G)=\Theta\left(\frac{\log\log n}{\log \log \log n}\right)$, i.e., the lower bound on $\chi(G)$ given by the fractional chromatic number $\chi_f(G)$ is tight up to a constant factor.

\section{Fractional chromatic number of graphs without regular subgraphs}\label{sec:frac}

In this section, we show that assuming the following conjecture of Harris about the fractional chromatic number of triangle-free graphs, the lower bound on the fractional chromatic number in our main result, Theorem~\ref{thm:main}, cannot be improved. 

\begin{conjecture}[\cite{harris2019}]\label{con:harris}
There exists an absolute constant $K>0$ such that for every sufficiently large $d\in \mathbb{N}$, every triangle-free  $d$-degenerate graph $G$ satisfies $\chi_f(G)\le K\frac{d}{\log d}$. 
\end{conjecture}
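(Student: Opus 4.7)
Conjecture~\ref{con:harris} seeks to replace the maximum degree by the degeneracy in Johansson's theorem that every triangle-free graph of maximum degree $\Delta$ has $\chi(G)=O(\Delta/\log\Delta)$, as well as its fractional refinements due to Molloy, Bernshteyn, Davies--Kang--Pirot--Sereni, and others. I would attempt it by adapting the random-greedy / hard-core / entropy-compression frameworks underlying those proofs, replacing every appeal to the maximum degree by an appeal to the degeneracy.

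\textbf{Step 1 (setup).} Fix a degeneracy ordering $v_1\prec v_2\prec\cdots\prec v_n$ of $G$, so that the back-neighborhood $N^-(v_i):=N(v_i)\cap\{v_1,\dots,v_{i-1}\}$ has size at most $d$. Crucially, triangle-freeness forces each $N^-(v_i)$ to be an \emph{independent} set in $G$, so at every vertex we have both ``few'' and ``well-behaved'' backward neighbors.

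\textbf{Step 2 (LP dualization).} By the LP duality formulation of $\chi_f$ recalled in the preliminaries, it suffices to exhibit, for every weighting $w:V(G)\to\mathbb{R}_{\ge 0}$, an independent set $I\subseteq V(G)$ of total weight at least $\Omega(\log d/d)\sum_{v\in V(G)}w(v)$. I would produce $I$ by sweeping through the degeneracy ordering and including each $v_i$ with a probability depending on $w(v_i)$ and the already-decided states of $N^-(v_i)$, the rule being chosen to imitate a (weighted) hard-core distribution.

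\textbf{Step 3 (local analysis).} The target is the pointwise estimate $\Pr[v_i\in I]\ge c\,w(v_i)\log d/d$ for every $i$. Since $|N^-(v_i)|\le d$ and $N^-(v_i)$ is independent, a Shearer-type decoupling of the events $\{u\in I\}_{u\in N^-(v_i)}$ should deliver the required bound, in direct analogy with the maximum-degree arguments. Summing over $i$ then yields the weighted independent set demanded by Step~2.

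\textbf{Main obstacle.} The classical proofs lean fundamentally on the forward-neighborhood of each vertex also being small, which makes the hard-core distribution essentially reversible and the necessary correlation estimates genuinely local. In a degeneracy ordering the forward-neighborhood of $v_i$ can be arbitrarily large, and the fate of $v_i$ in $I$ couples nontrivially with the states of a potentially enormous number of later vertices. The central challenge, and presumably the reason the conjecture has remained open since \cite{harris2019}, is to design an intrinsically one-sided sampling rule together with a token / discharging / entropy-compression argument that consults only backward information yet still meets the target inclusion probability at \emph{every} vertex, rather than merely on average.
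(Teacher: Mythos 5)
This statement is a \emph{conjecture} (due to Harris), not a theorem of the paper: the authors cite it as an open problem and only use it as a hypothesis in Section~\ref{sec:frac}. No proof exists in the paper, and to the best of current knowledge none exists in the literature, so there is nothing for your argument to match --- it can only be judged as an attempted resolution of an open problem, and as such it is incomplete. Your Steps 1--3 correctly recall the known maximum-degree results (Johansson, Molloy, Davies--Kang--Pirot--Sereni, etc.) and the LP-duality reduction, but Step 3 is precisely where the proof is missing: you assert that a ``Shearer-type decoupling'' of the events $\{u\in I\}_{u\in N^-(v_i)}$ ``should deliver the required bound,'' yet you give no sampling rule and no correlation estimate. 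To your credit, your ``Main obstacle'' paragraph then accurately explains why this step cannot be carried out by direct analogy: in a degeneracy ordering the forward-neighborhood of $v_i$ is unbounded, so the state of $v_i$ is coupled to arbitrarily many later vertices, and all known hard-core/entropy-compression analyses break. In other words, you have correctly diagnosed why the conjecture is open, but you have not proved it; the proposal is an honest research plan, not a proof.

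One smaller point worth flagging: in Step 1 you claim that triangle-freeness forces each back-neighborhood $N^-(v_i)$ to be independent. That is true of the \emph{full} neighborhood of $v_i$ in a triangle-free graph, so the observation is correct but is not special to the degeneracy ordering, and it is exactly the same independence that the maximum-degree proofs already exploit --- it does not by itself compensate for the loss of control over forward-neighborhoods. If your goal is only to use the conjecture conditionally (as the paper does in its Proposition in Section~\ref{sec:frac}), you should state it as a hypothesis rather than attempt a proof.
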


We remark that Harris' conjecture has received significant attention in recent years, see e.g.~\cite{blumenthal22,kelly2024,kwan2020}. 
We can now show the claimed result, whose proof is based on a subsampling-trick.

\begin{proposition}
If Conjecture~\ref{con:harris} holds, then for every $k\in \mathbb{N}$ there exists a constant $C_k>0$ such that for sufficiently large $n\in \mathbb{N}$ every $n$-vertex graph $G$ without a $k$-regular subgraph satisfies 
$$\chi_f(G)\le C_k\frac{\log \log n}{\log \log \log n}.$$
\end{proposition}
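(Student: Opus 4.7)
The plan is to combine the degeneracy of $G$ coming from Theorem~\ref{thm:averagedegree} with Harris' conjecture, applied to a triangle-free induced subgraph of $G$ produced by a subsampling trick.

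First, I would note that every subgraph $H\subseteq G$ inherits the property of containing no $k$-regular subgraph, so by Theorem~\ref{thm:averagedegree} one has $e(H) \le C_k |V(H)|\log\log|V(H)| \le C_k|V(H)|\log\log n$, hence $H$ has a vertex of degree at most $D := 2C_k\log\log n$. Iterating, $G$ is $D$-degenerate, and the target bound $\chi_f(G) \le C_k\log\log n/\log\log\log n$ is equivalent to $\chi_f(G) = O_k(D/\log D)$ -- exactly the conclusion of Harris' conjecture for triangle-free $D$-degenerate graphs. By LP-duality it therefore suffices to show that for every weighting $w: V(G) \to \mathbb{R}_{\ge 0}$, there is an independent set $I$ of $G$ with $w(I) \ge \Omega_k\!\left(w(V(G))\log D/D\right)$.

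The heart of the argument is the subsampling trick handling the possibility that $G$ is not triangle-free. Given $w$, I would take a random subset $V' \subseteq V(G)$ by including each vertex independently with an appropriately tuned probability, and then prune $V'$ to $V''\subseteq V'$ by removing every vertex that lies in a triangle of $G[V']$. The induced subgraph $G[V'']$ is then triangle-free and still $D$-degenerate (being a subgraph of $G$), so applying Harris' conjecture to it produces an independent set $I\subseteq V''$ with $w(I)\ge w(V'')\log D/(KD)$, and $I$ is in particular independent in $G$. The role of the sampling is then to guarantee $w(V'')=\Omega_k(w(V(G)))$ with positive probability.

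The key structural input making this subsampling possible is that $G$ is $K_{k+1}$-free: since $K_{k+1}$ is itself $k$-regular, $G$ cannot contain it as a subgraph, and hence $|N(u)\cap N(v)|\le k-1$ for every $uv\in E(G)$; in other words, every edge of $G$ lies in at most $k-1$ triangles. Combined with $D$-degeneracy this yields $O_k(nD)$ total triangles in $G$, which gives enough control over the expected weight lost in the pruning step to make the subsampling go through. The main technical obstacle is the delicate quantitative balance in this step: the sampling probability must be small enough that $G[V']$ has few triangles (so $V''$ retains most of $V'$), yet large enough that $w(V'')$ remains a constant fraction of $w(V(G))$. A naive uniform subsampling with a single fixed probability already yields a nontrivial bound but seems to lose a factor of roughly $\sqrt{D}$; closing this gap -- likely via an adaptive or two-step sampling scheme that exploits the ``few triangles per edge'' bound coming from $K_{k+1}$-freeness more carefully -- is where I expect the main difficulty of the proof to lie.
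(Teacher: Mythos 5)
Your high-level plan (degeneracy from Theorem~\ref{thm:averagedegree}, subsample to a triangle-free induced subgraph, apply Harris) is the right one, but the proposal has a genuine gap in exactly the place you flag, and the pruning rule you propose does not work as stated. The rule ``delete every vertex of $V'$ lying in a triangle of $G[V']$'' is too aggressive: a vertex $v$ of large degree can lie in $\Omega(\deg(v))$ triangles even in a graph with no $k$-regular subgraph (e.g.\ a hub joined to a perfect matching), so conditional on $v\in V'$ it survives the pruning only with probability roughly $(1-p^2)^{t(v)}$, which is $o(1)$ unless $p\lesssim t(v)^{-1/2}$; since $t(v)$ is not bounded in terms of $D$, no choice of $p$ that preserves a constant (or even $\mathrm{poly}(D)^{-1}$) fraction of the weight is available when the weight sits on such hubs. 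The fix in the paper is to prune along a degeneracy ordering $v_1,\ldots,v_n$: keep $v_i\in Y$ only if its \emph{back}-neighbourhood $N(v_i)\cap\{v_1,\ldots,v_{i-1}\}\cap Y$ is independent and has size at most $2C'(\log\log n)^{1/4}$. This still forces triangle-freeness (look at the last vertex of any triangle), but the relevant random set now lives inside a back-neighbourhood of size at most $D=O(\log\log n)$, whose induced subgraph has (again by Theorem~\ref{thm:averagedegree}) only $O(D\log\log D)\le \tfrac16 D^{3/2}$ edges, so with $p=D^{-3/4}$ the failure probability per vertex is at most a constant.

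The second, more important, missing idea is how to avoid the $\sqrt D$-type loss you anticipate. You insist on keeping $w(V'')=\Omega(w(V(G)))$ and then apply Harris to a graph that is still only known to be $D$-degenerate; this forces $p=\Omega(1)$ and is what makes the triangle removal impossible. The paper instead \emph{accepts} losing a factor $p$ in weight, because the same pruning simultaneously reduces the degeneracy of the retained graph to $d=O(pD)$. Applying Conjecture~\ref{con:harris} with the reduced parameter $d$ yields an independent set of weight at least
\[
\frac{\log d}{Kd}\cdot \Omega\bigl(p\,w(V(G))\bigr)=\Omega\!\left(\frac{\log(pD)}{D}\,w(V(G))\right),
\]
and since $p=D^{-3/4}$ gives $pD=D^{1/4}$, one has $\log(pD)=\Theta(\log D)$ and the factor $p$ cancels exactly. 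This self-compensation of the subsampling in the $d/\log d$ bound is the crux of the argument and is absent from your write-up; your $K_{k+1}$-freeness observation (at most $k-1$ triangles per edge) is correct but is not needed once one works with back-neighbourhoods, where the edge count is controlled directly by Theorem~\ref{thm:averagedegree}.
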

\begin{proof}
Fix $k\in \mathbb{N}$. By Theorem~\ref{thm:averagedegree} there exists a constant $C>0$ such that every sufficiently large $n$-vertex graph with average degree at least $C\log\log n$ contains a $k$-regular subgraph. This implies that there exists a constant $C'\ge 1$ such that every sufficiently large $n$-vertex graph $G$ without a $k$-regular subgraph is $\lfloor C'\log\log n\rfloor$-degenerate. 

Let $C'':=24KC'$, where $K$ is the constant from Conjecture~\ref{con:harris}. We claim that every sufficiently large $n$-vertex graph $G$ without a $k$-regular subgraph satisfies $\chi_f(G)\le C''\frac{\log \log n}{\log \log \log n}$. To do so, by definition, it suffices to show that for every weighting $w:V(G)\rightarrow \mathbb{R}_{\ge 0}$ with $\sum_{v\in V(G)}{w(v)}=1$, there exists an independent set $I$ in $G$ such that $\sum_{v\in I}{w(I)}\ge \frac{\log \log \log n}{C''\log \log n}$. 

To find such an independent set, we first show the following.
\begin{claim*}
There exists a subset $X\subseteq V(G)$ with $\sum_{v\in X}{w(v)}\ge \frac{1}{3}(\log\log n)^{-3/4}$ such that $G[X]$ is triangle-free and $\lfloor 2C'(\log \log n)^{1/4} \rfloor$-degenerate.
\end{claim*}
\begin{proof}[Proof of the Claim.]
Let $p:=(\log \log n)^{-3/4} \in [0,1]$. Let $v_1,\ldots,v_n$ be a linear ordering of $V(G)$ witnessing the degeneracy of $G$, i.e., such that $v_i$ has at most $C'\log \log n$ neighbors among $\{v_1,\ldots,v_{i-1}\}$ for every $i \in [n]$. Now, consider the following random process to generate $X$: First, create a random subset $Y$ of $V(G)$ in which every vertex is included independently with probability $p$. Then, create a subset $X\subseteq Y$ according to the following rule: A vertex $v_i\in Y$ is included in $X$ if and only if $|N(v_i)\cap \{v_1,\ldots,v_{i-1}\}\cap Y|\le 2C'(\log\log n)^{1/4}$ and $N(v_i)\cap \{v_1,\ldots,v_{i-1}\}\cap Y$ is an independent set in $G$. 

It follows easily from the definition of this process that $G[X]$ is $\lfloor 2C'(\log \log n)^{1/4}\rfloor$-degenerate and triangle-free. So, to prove the claim it suffices to show that $\mathbb{E}\left[\sum_{v\in X}{w(v)}\right]\ge \frac{1}{3}(\log \log n)^{-3/4}$. By linearity of expectation, it is enough to show that $\mathbb{P}[v_i \in X]\ge \frac{1}{3}(\log \log n)^{-3/4}$ for every $i \in [n]$. Note that by Markov's inequality, for every $i\in [n]$, we have that 
$$\mathbb{P}[|N(v_i)\cap \{v_1,\ldots,v_{i-1}\}\cap Y|>2C'(\log \log n)^{1/4}]\le \frac{pC'\log\log n}{2C'(\log \log n)^{1/4}}=\frac{1}{2}$$ and 
$$\mathbb{P}[N(v_i)\cap \{v_1,\ldots,v_{i-1}\}\cap Y\text{ not independent}]\le p^2|E(G[N(v_i)\cap \{v_1,\ldots,v_{i-1}\}])|.$$
Since $G[N(v_i)\cap \{v_1,\ldots,v_{i-1}\}]$ is a graph on at most $C'\log \log n$ vertices without a $k$-regular subgraph, for $n$ large enough Theorem~\ref{thm:averagedegree} implies that $|E(G[N(v_i)\cap \{v_1,\ldots,v_{i-1}\}])| \le O(\log \log n\log\log\log\log n)\le \frac{1}{6}(\log \log n)^{3/2}$. Hence, we obtain $$\mathbb{P}[N(v_i)\cap \{v_1,\ldots,v_{i-1}\}\cap Y\text{ not independent}]\le \frac{1}{6}p^2(\log \log n)^{3/2}\le \frac{1}{6}.$$
Altogether, this implies (by definition of the process) that $\mathbb{P}[v_i \in X]\ge \mathbb{P}[v_i\in Y]\left(1-\frac{1}{2}-\frac{1}{6}\right)=\frac{1}{3}p=\frac{1}{3}(\log \log n)^{-3/4}$. This shows that there exists a subset $X\subseteq V(G)$ with the required properties, which concludes the proof of the claim.
\end{proof}
Since $G[X]$ is triangle-free and $d$-degenerate for $d=\lfloor 2C'(\log \log n)^{1/4}\rfloor$, we may now apply the statement of Conjecture~\ref{con:harris} to $G[X]$ to find that $\chi_f(G[X])\le K\frac{d}{\log d}$. In particular, this means that there exists an independent set $I\subseteq X$ in $G$ such that $$\sum_{v\in I}{w(v)}\ge \frac{\log d}{Kd}\sum_{v\in X}{w(v)}\ge \frac{\log d}{Kd}\frac{1}{3}(\log \log n)^{-3/4}\ge \frac{\log\log \log n}{24KC'\log \log n}=\frac{\log \log \log n}{C''\log \log n}.$$ This is the desired statement, and hence $\chi_f(G)\le C''\frac{\log \log n}{\log \log \log n}$, as initially claimed. 
\end{proof}

\textbf{Acknowledgments.} We would like to thank Oliver Janzer for interesting discussions related to the topic.
\vspace{-0.3cm}
\bibliographystyle{abbrv}
\bibliography{bibliography.bib}
\end{document}